\theoremstyle{plain}
\newtheorem{thm}{Theorem}[section]
\newtheorem{lem}[thm]{Lemma}
\theoremstyle{definition}
\newtheorem{rem}[thm]{Remark}
\newtheorem{dfns-rems}[thm]{Definitions and Remarks}
\newtheorem{notas-rems}[thm]{Notations and Remarks}
\newtheorem{exmps-rems}[thm]{Examples and Remarks}
\begin{document}


\title{Nowhere-zero 5-flow on signed ladders}

\author[L. Parsaei-Majd]{Leila Parsaei-Majd}

\address{L. Parsaei-Majd, Hasso Plattner Institute, University of Potsdam, Germany}

\email{leila.parsaei84@yahoo.com}


\begin{abstract}
	In 1983, Bouchet conjectured that every flow-admissible signed graph admits a nowhere-zero $6$-flow. In this paper, we prove that Bouchet's conjecture holds for all signed ladders, circular and M\"obius ladders. In fact, all signed ladders, circular and M\"obius ladders admit a nowhere-zero $5$-flow except for one case of signed circular ladders. Of course, the exception also has a nowhere-zero $6$-flow. 
\end{abstract}


\subjclass[2010]{Primary: 05C22; Secondary: 05C21.}


\keywords{Signed graph, Nowhere-zero flow, Ladders}



\maketitle

\section{Introduction}
A \textit{signed graph} is a graph with each edge labelled with a sign, $+$ or 
$-$. An assignment of signs to each edge is a signature.
An orientation of a signed graph is obtained by dividing each edge
into two half-edges each of which receives its own direction. A positive edge
has one half-edge directed from and the other half-edge directed to its 
end-vertex. Hence, a negative edge has both half-edges directed either towards 
or from their respective end-vertices. 

Let $v$ be a vertex of a signed graph $G$. \textit{Vertex switching} at $v$ changes the sign of each edge incident with $v$ to its
opposite. Let $X\subseteq V$. 
Switching a vertex set $X$ means reversing the signs of all edges between $X$
and its complement. Switching a set $X$ has the
same effect as switching all the vertices in $X$, one after another.\\
Two signed graphs $G$ and $G^{\prime}$ with the same underlying graph 
but possibly different signatures on their edges are \textit{switching 
	equivalent}, if there is a series of switchings that transforms $G$ to 
$G^{\prime}$. Switching equivalence is an equivalence relation on the 
signatures of a fixed graph.
If $G^{\prime}$ is isomorphic to a switching of $G$, we say that $G$ and 
$G^{\prime}$ are \textit{switching isomorphic}.\\
A signed graph is \textit{balanced} if and only if it is switching equivalent to the signed graph
with all-positive signature. And a signed graph is \textit{anti-balanced} if it is switching equivalent to
the signed graph with all-negative signature. In other words, a signed graph is balanced
(anti-balanced) if and only if every circuit of the underlying graph contains
an even (odd) number of negative edges, as vertex switching preserves the
parity of the number of negative edges around a circuit.\\

Signed graphs were introduced by Harary \cite{harary} as a model for social networks. Also, signed graphs have diverse applications, a more recent one is in quantum computing \cite{applications}. We refer to \cite{{harary},{zaslavsky1}} for more information about signed graphs.

In this paper, we use the symbols $CL_{n}$ and $ML_{n}$ to denote a signed 
circular ladder and a signed M\"obius ladder of order $2n$, respectively. A \textit{nowhere-zero $k$-flow} on a signed graph $G$ is an assignment of an orientation and a value from $\{\pm 1, \pm 2, \ldots, \pm (k - 1)\}$ to each 
edge in such a way that for each vertex of $G$ the sum of incoming values equals 
the sum of outgoing values (Kirchhoff's law). We call such graphs flow-admissible.
In 1983, Bouchet in \cite{[4]} conjectured that every flow-admissible signed graph admits a nowhere-zero $6$-flow. For example, Bouchet showed that there is a signature for the Petersen
graph which admits no nowhere-zero $5$-flow. Edita M\'{a}\v{c}ajov\'{a} found a case of $CL_4$, Fig. \ref{cl4}, using a computer search which has no nowhere-zero $5$-flow but admits a nowhere-zero $6$-flow. For further examples of signed graphs with this properties refer to \cite{survey}.\\
However, signed circular ladders do not produce any more examples: in
Sections 2 and 3 we show that all signed circular ladders apart from Fig. \ref{cl4} and all signed M\"obius ladders admit a nowhere-zero $5$-flow.\\

\section{Nowhere-zero flow on signed circular ladder $CL_{n}$}

\begin{thm}[{\cite[Theorem 4.4]{twonegative}}]\label{twoneg}
	Let $G$ be a flow-admissible signed cubic graph with two negative 
	edges. If $G$ is bipartite, then it has a nowhere-zero $k$-flow with $k\leqslant 4$.
\end{thm}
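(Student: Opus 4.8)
The plan is to reduce the signed flow problem to an ordinary (unsigned) flow problem on a closely related cubic graph, and then to exploit the fact that regular bipartite graphs decompose into perfect matchings. Write $e_1 = x_1y_1$ and $e_2 = x_2y_2$ for the two negative edges and fix a bipartition $V(G) = A \cup B$ of the underlying bipartite graph. Since switching a vertex set does not change the existence of a nowhere-zero $k$-flow (it only relabels the orientations of the incident half-edges), and since a signed graph that is switching-equivalent to an all-positive signature has exactly the flows of its underlying unsigned graph, I would first dispose of the balanced case: if the two negative edges can be switched away, then $G$ is flow-equivalent to its underlying bipartite cubic graph, which decomposes into three perfect matchings and hence carries a nowhere-zero $4$-flow. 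So I may assume $G$ is unbalanced.

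For the unbalanced case, form an unsigned cubic graph $G'$ by deleting $e_1$ and $e_2$ and reconnecting their endpoints by two new edges. There are two ways to do this — pairing ``same side'' ($x_1x_2$ and $y_1y_2$) or ``crossing'' ($x_1y_2$ and $y_1x_2$) — and the first key step is to check that exactly one of these pairings is consistent with the signed flow semantics. An extraverted negative edge $e_i$ with value $f$ behaves at both of its endpoints like an outgoing flow of value $f$, so the rerouting must carry this common value $f$ from the two ``sources'' at $e_1$ to the two ``sinks'' at $e_2$. This yields a correspondence under which a nowhere-zero $k$-flow on $G'$ restricts to a nowhere-zero $k$-flow on $G$ (equal on the shared positive edges, and with the two new edge-values transferred to $e_1,e_2$ together with the appropriate introverted/extraverted orientation), and conversely. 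I would verify Kirchhoff's law at the four vertices $x_1,y_1,x_2,y_2$ under the half-edge convention in order to pin down which pairing is correct and to confirm that the transferred values stay in $\{\pm 1,\dots,\pm(k-1)\}$ and remain nonzero.

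The second key step is the parity argument that makes the bipartite hypothesis do its work. Because every edge of $G$ joins $A$ to $B$, the sides of $x_1,y_1,x_2,y_2$ are constrained, and I would show that the flow-valid pairing is precisely the one that reconnects the endpoints across the bipartition, so that $G'$ is again bipartite (flow-admissibility of $G$, via Bouchet's characterization, is what rules out the degenerate configurations in which no valid rerouting exists). Once $G'$ is known to be a bipartite cubic graph, it is regular bipartite, hence by K\"onig's theorem it decomposes into three perfect matchings; assigning the three elements of $(\mathbb{Z}_2\times\mathbb{Z}_2)\setminus\{0\}$ to the three matchings produces a nowhere-zero $\mathbb{Z}_2\times\mathbb{Z}_2$-flow on $G'$, which for an ordinary cubic graph is the same as an integer nowhere-zero $4$-flow by Tutte's equivalence. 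Pulling this flow back through the correspondence gives the desired nowhere-zero $k$-flow on $G$ with $k\leqslant 4$.

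I expect the main obstacle to be the first step: making the correspondence between $G$ and $G'$ precise and proving that the \emph{unique} flow-admissible rerouting coincides with the \emph{bipartiteness-preserving} rerouting. Unlike ordinary graphs, signed graphs do not enjoy the clean equivalence between group-valued and integer-valued nowhere-zero flows, so I cannot simply produce a $\mathbb{Z}_2\times\mathbb{Z}_2$-flow on $G$ directly; routing everything through the unsigned graph $G'$, where Tutte's equivalence does hold, is exactly what makes the integer bound $k\leqslant 4$ rigorous. The careful bookkeeping of half-edge orientations at the four distinguished vertices, combined with the parity of the bipartition, is the delicate part, whereas the concluding appeal to perfect-matching decompositions is routine.
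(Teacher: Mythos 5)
This statement is not proved in the paper at all: it is imported verbatim from Rollov\'a, Schubert and Steffen \cite[Theorem~4.4]{twonegative}, so there is no in-paper proof to compare yours against. Judged on its own terms, your proposal has a genuine gap at its central step, namely the ``and conversely'' half of the correspondence between flows on $G$ and on the auxiliary unsigned graph $G'$.

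The forward direction is fine, but note that \emph{both} pairings, not just one, are flow-consistent: summing Kirchhoff's law over all vertices shows that in any flow on $G$ the two negative edges must carry a common absolute value $a$, one acting as a source of $a$ at each of its ends and the other as a sink of $a$ at each of its ends, and rerouting $a$ along either $\{x_1x_2,\,y_1y_2\}$ or $\{x_1y_2,\,y_1x_2\}$ then restores Kirchhoff's law. The real difficulty is the converse: a nowhere-zero $4$-flow on $G'$ pulls back to $G$ \emph{only if} the two new edges carry the same value, coherently oriented from the two endpoints of one old negative edge to the two endpoints of the other. A nowhere-zero $4$-flow on $G'$ obtained from a $1$-factorisation has no reason to satisfy this constraint; in the standard construction (value $1$ around the circuits of $F_1\cup F_2$, value $2$ around those of $F_2\cup F_3$) the two new edges can perfectly well receive values $1$ and $3$, or $1$ and $-1$ in incompatible directions, and then nothing pulls back. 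This hidden equal-value constraint is exactly what makes the theorem nontrivial, and it is why arguments in this area (compare Theorem~\ref{2-factors} of the present paper) work natively with the signed graph, choosing a $1$-factorisation so that two of the three $2$-factors are balanced, rather than passing to an unsigned surrogate. To repair your proof you would need to show that $G'$ admits a nowhere-zero $4$-flow assigning equal, coherently directed values to the two new edges (or replace the whole reduction by a balanced-$2$-factor argument); as written, the concluding ``pulling this flow back'' step does not go through. You have also located the delicate point in the wrong place: the half-edge bookkeeping and the choice of pairing are harmless, while the pullback constraint is the obstruction.
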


We have the following lemma due to K\"{o}nig \cite{konig}.

\begin{lem}
	Every $r$-regular bipartite graph, $r \geqslant 1$, is $1$-factorable.
\end{lem}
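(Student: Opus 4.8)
The plan is to argue by induction on $r$, extracting a single perfect matching (a $1$-factor) at each step and deleting it, so that the regularity drops by one. Recall that a graph is \emph{$1$-factorable} precisely when its edge set partitions into $1$-factors, so it suffices to establish two things: (i) every $r$-regular bipartite graph with $r \geq 1$ contains a $1$-factor $M$, and (ii) deleting the edges of $M$ leaves an $(r-1)$-regular bipartite graph, to which the inductive hypothesis applies. The base case $r = 1$ is immediate, since a $1$-regular graph is itself a perfect matching.

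Step (ii) is routine: removing a perfect matching decreases every vertex degree by exactly one, preserving both regularity and bipartiteness. The entire content therefore lies in step (i). Write the bipartition as $V = A \sqcup B$. First I would observe that $r$-regularity forces $|A| = |B|$: counting the edges from each side gives $|E| = r|A| = r|B|$, hence $|A| = |B|$ since $r \geq 1$.

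To produce the $1$-factor I would appeal to Hall's marriage theorem, so the key is to verify Hall's condition $|N(S)| \geq |S|$ for every $S \subseteq A$. This I establish by double counting the edges incident to $S$. On one hand there are exactly $r|S|$ such edges, each having its other endpoint in $N(S)$; on the other hand the total number of edges incident to $N(S)$ is $r|N(S)|$. Since every edge leaving $S$ terminates in $N(S)$, we obtain $r|S| \leq r|N(S)|$, and therefore $|S| \leq |N(S)|$. Hall's theorem then yields a matching saturating $A$, and because $|A| = |B|$ this matching is in fact a $1$-factor, completing step (i).

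The main obstacle --- really the only substantive point --- is the existence of the $1$-factor in step (i); everything else is bookkeeping. If one prefers not to invoke Hall as a black box, the same double-counting estimate can be repackaged as a direct augmenting-path argument, but the Hall formulation is cleanest. Iterating the extraction $r$ times decomposes the edge set into $r$ pairwise disjoint $1$-factors, which establishes $1$-factorability.
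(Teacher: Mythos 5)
Your proof is correct. The paper does not actually prove this lemma --- it simply cites it as a classical result of K\"onig --- so there is no argument to compare against; your induction on $r$, extracting a perfect matching at each stage via Hall's theorem with the standard double count $r|S| \leqslant r|N(S)|$, is the textbook proof and is complete as written.
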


Therefore, we can decompose each cubic bipartite graph into $1$-factors.
Consider a signed graph $G$ carrying a $k$-flow $\phi$ and let $P = e_1e_2 
\ldots e_r$ be
an $u-v$ trail in $G$. By sending a value $b \in \{\pm 1, \pm 2, \ldots, \pm (k 
- 1)\}$ from $u$ to $v$ along $P$ we mean
reversing the orientation of the edge $e_1$ so that it leaves $u$, adding $b$ to
$\phi (e_1)$, and adding $\pm b$ to $\phi (e_i)$ for all other edges of $P$ in 
such a way that Kirchhoff's law is fulfilled at each inner vertex of $P$.

\begin{thm}\label{2-factors}
Let $G$ be a signed cubic bipartite graph and $\{F_1, F_2, F_3\}$ be a 
$1$-factorisation of $G$. Consider $2$-factors
$F_1\cup F_2$, $F_1\cup F_3$ and $F_2\cup F_3$. 
If two of them are balanced, then $G$ admits a nowhere-zero $4$-flow.
\end{thm}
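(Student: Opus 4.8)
The plan is to build a nowhere-zero $4$-flow as the sum of two elementary circulations, one supported on each of the two balanced $2$-factors, carrying the magnitudes $1$ and $2$ so that no cancellation can occur.

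First I would normalize the hypothesis. Any two of the three $2$-factors share exactly one of the matchings $F_1,F_2,F_3$ (for instance $F_1\cup F_2$ and $F_1\cup F_3$ share $F_1$, while $F_1\cup F_2$ and $F_2\cup F_3$ share $F_2$). After relabelling I may therefore assume that the two balanced $2$-factors are $H_1:=F_1\cup F_2$ and $H_2:=F_1\cup F_3$, sharing the matching $F_1$; the matchings $F_2$ and $F_3$ each lie in exactly one of them. Because $G$ is cubic bipartite, every $2$-factor is a disjoint union of even cycles, and ``balanced'' here means each of these cycles carries an even number of negative edges.

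Next comes the central fact I would establish: a balanced cycle admits a nowhere-zero flow of any prescribed constant magnitude $t$. Since a balanced cycle is switching-equivalent to an all-positive one, I would switch it to be all-positive, route the constant circulation of value $t$ around it (each vertex then has one incoming and one outgoing half-edge of value $t$, so Kirchhoff's law holds), and switch back. As switching is a magnitude-preserving bijection on flows, every edge of the cycle retains $|\phi(e)|=t$. Applying this to each cycle of $H_1$ with $t=1$ yields a flow $\phi_A$ on $G$ with $\phi_A(e)=\pm1$ on $E(H_1)$ and $\phi_A(e)=0$ on $F_3$; doing the same on $H_2$ with $t=2$ yields $\phi_B$ with $\phi_B(e)=\pm2$ on $E(H_2)$ and $\phi_B(e)=0$ on $F_2$.

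Finally I would add the two flows. Fixing a single reference bidirected orientation of $G$ and representing both $\phi_A$ and $\phi_B$ as integer functions on $E(G)$ in that orientation, the flows form an additive group, so $\phi:=\phi_A+\phi_B$ is again a flow. Checking magnitudes matching by matching: on $F_2$ only $\phi_A$ contributes, giving $\pm1$; on $F_3$ only $\phi_B$ contributes, giving $\pm2$; and on the shared matching $F_1$ both contribute, giving $\pm1\pm2\in\{\pm1,\pm3\}$. Every edge value is thus nonzero with absolute value at most $3$, so $\phi$ is a nowhere-zero $4$-flow. The step I expect to require the most care is precisely this combination: transporting the all-positive constant circulation back through switching into a constant-magnitude flow on each balanced cycle, and checking that both summands may legitimately be taken in one common reference orientation, so that the elementary estimate $\pm1\pm2\neq 0$ is exactly what keeps $F_1$ nowhere-zero.
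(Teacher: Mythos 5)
Your proof is correct and follows essentially the same route as the paper: send a circulation of magnitude $1$ around the cycles of one balanced $2$-factor and magnitude $2$ around the other, then observe that on the shared perfect matching the values $\pm1\pm2\in\{\pm1,\pm3\}$ never cancel. You simply make explicit two details the paper leaves implicit, namely that balancedness is what allows a constant-magnitude circulation on each cycle and that the two flows may be added in a common reference orientation.
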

\begin{proof}
Without loss of generality, assume that $F_1\cup F_2$ and $F_2\cup F_3$ are 
balanced $2$-factors. Nowhere-zero $4$-flow is obtained by sending value $1$ 
along each circuit of $F_1\cup F_2$ and value $2$
along $F_2\cup F_3$.
\end{proof}

\begin{rem}\label{numbnega}
Note that considering switching equivalence the maximum number of negative edges 
can occur in the subladder given in Fig. \ref{cub8}, is $3$.

\begin{figure}
[!htb]
\minipage{0.55\textwidth}
\includegraphics[width=\linewidth]{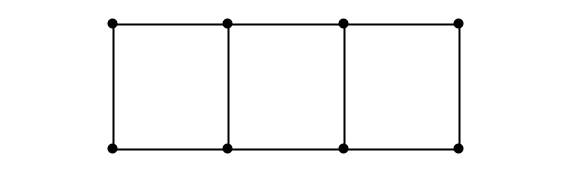}
\caption{}\label{cub8}
\endminipage
\end{figure}

In fact, just in three cases three negative edges occur, and for other cases by 
switching at some vertices the number of negative edges declines. The possible 
cases are listed in Fig. \ref{twocases}, in which dashed lines denote negative edges.
\end{rem}
\begin{figure}
[!htb]
\minipage{0.85\textwidth}
\includegraphics[width=\linewidth]{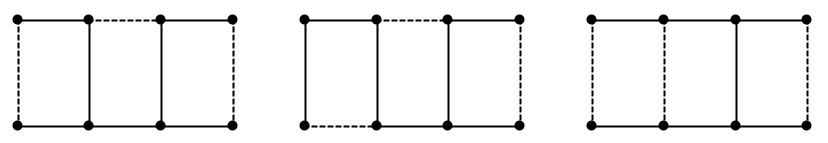}
\caption{}\label{twocases}
\endminipage
\end{figure}

\begin{rem}\label{numbernegativec}
One can check that in a signed circular ladder $CL_{n}$, up to switching 
equivalence there exist at most $[\frac{n}{2}]+1$ negative edges. 
\end{rem}

\begin{lem}\label{cl5,6}
Signed circular ladders $CL_{5}$ and $CL_{6}$ with any signature have 
a nowhere-zero $5$-flow.
\end{lem}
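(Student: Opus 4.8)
The plan is to argue up to switching equivalence and to organise the proof by the number $d$ of negative edges in a representative of the switching class with the fewest negative edges. By Remark~\ref{numbernegativec} we may assume $d\le 3$ for $CL_5$ and $d\le 4$ for $CL_6$. A reduced signature with a single negative edge is never flow-admissible: that edge lies in no balanced circuit, and being the only negative edge it lies in no pair of edge-disjoint unbalanced circuits, so it is a coloop of the signed-graphic matroid and must carry $0$ in every flow. Hence for the (flow-admissible) signatures the statement concerns, it suffices to treat $d\in\{0,2,3\}$ for $CL_5$ and $d\in\{0,2,3,4\}$ for $CL_6$.

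For $d=0$ the signature is balanced, hence switching-equivalent to the all-positive one; the underlying prism is a $3$-edge-colourable cubic graph and therefore carries a nowhere-zero $4$-flow, which is a fortiori a $5$-flow. For $CL_6$, which is bipartite, the case $d=2$ is settled at once by Theorem~\ref{twoneg}, giving even a $4$-flow; and whenever a reduced signature makes two of the three natural $2$-factors (the rungs together with one colour class of each of the two hexagons) balanced, Theorem~\ref{2-factors} again yields a $4$-flow. These observations dispose of $d=0$ for both ladders and of a large part of the $CL_6$ analysis.

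The remaining work is to handle the cases to which the bipartite tools do not apply: all of $CL_5$ with $d\in\{2,3\}$ (here $CL_5$ is not bipartite, so neither Theorem~\ref{twoneg} nor Theorem~\ref{2-factors} is available), and $CL_6$ with $d=3$, or with $d=4$ in the sub-cases where no two $2$-factors are balanced. For these I would first cut the number of configurations down to a short list using the dihedral symmetry of the ladder (cyclic rotation of the rungs, the top--bottom reflection, and the reversal reflection) together with the freedom to switch; up to these symmetries the placements of two or three negative edges reduce to a handful of patterns (two negative edges on a common rung-adjacent pair, on opposite rungs, or on a rung and a cycle edge, and the analogous triples). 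For each surviving pattern I would exhibit an explicit orientation and edge values in $\{\pm 1,\pm 2,\pm 3,\pm 4\}$ and verify Kirchhoff's law at every vertex. The natural way to produce these is to superpose a value around a balanced circuit of the prism with one or two \emph{barbell} flows, each travelling around one unbalanced circuit, along a connecting path, around a second unbalanced circuit, and back, so that the connecting path carries a doubled value while the negative edges act as turning points, using the trail-sending operation described just before Theorem~\ref{2-factors}; for $d=3$ two such barbells sharing part of their support account for the three negative edges.

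The main obstacle is the explicit construction in these last cases, and specifically keeping every edge value nonzero and bounded by $4$: the balanced-circuit and barbell contributions overlap on the rungs and on the two cycles, and controlling the resulting sums so that no edge receives value $0$ or exceeds $4$ in absolute value is exactly where the bound $5$ (rather than $4$) is genuinely needed and where each pattern must be checked individually. A secondary point to confirm is completeness of the case list, namely that symmetry and switching really do reduce every flow-admissible signature of $CL_5$ and $CL_6$ to one of the treated patterns, which rests on the counting in Remark~\ref{numbernegativec}.
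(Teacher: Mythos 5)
Your proposal takes essentially the same route as the paper: reduce to a finite list of signatures up to switching equivalence and the ladder's symmetries using Remark~\ref{numbernegativec}, dispatch the bipartite $CL_6$ with two negative edges via Theorem~\ref{twoneg}, and settle every remaining pattern by exhibiting an explicit nowhere-zero $5$-flow (the paper does exactly this, enumerating the cases in Figures~\ref{cl5} and~\ref{cl6} and checking each one). The extra observations you add (ruling out $d=0$ and $d=1$ explicitly, and the barbell construction for producing the flows) are correct but do not change the substance, which in both cases is the finite case check.
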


\begin{proof}
By Remark \ref{numbernegativec}, a signed circular ladder $CL_{5}$ has at most 
three negative edges. All types of the signed graph $CL_{5}$ with two or three 
negative edges are listed in Fig. \ref{cl5}.

\begin{figure}[!htb]
\minipage{0.95\textwidth}
\includegraphics[width=\linewidth]{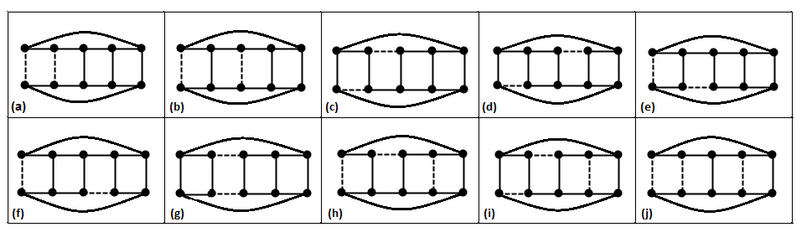}
\caption{$CL_{5}$ with two and three negative edges}\label{cl5}
\endminipage
\end{figure}

It is not hard to check that all signed graphs $CL_{5}$ with two negative edges 
given in Fig. \ref{cl5}, have a nowhere-zero $4$-flow, and signed graphs 
$CL_{5}$ with three negative edges, Graphs (h), (i), (j) in Fig. \ref{cl5}, 
admit a nowhere-zero $5$-flow.

Now, we show that the signed graph $CL_{6}$ with any signature (with at least 
two negative edges) has a nowhere-zero $5$-flow. 
By Theorem \ref{twoneg}, if the signed graph $CL_{6}$ has two negative edges, it 
admits a nowhere-zero $4$-flow.
Moreover, all types of the signed graph $CL_{6}$ with three and four negative 
edges are listed in Fig. \ref{cl6}. All of them have a nowhere-zero $5$-flow 
(Graphs (b), (f), (g), and (h) have nowhere-zero $4$-flow, and Graphs (a), (c), 
(d), and (e) admit nowhere-zero $5$-flow).

\begin{figure}[!htb]
\minipage{\textwidth}
\includegraphics[width=\linewidth]{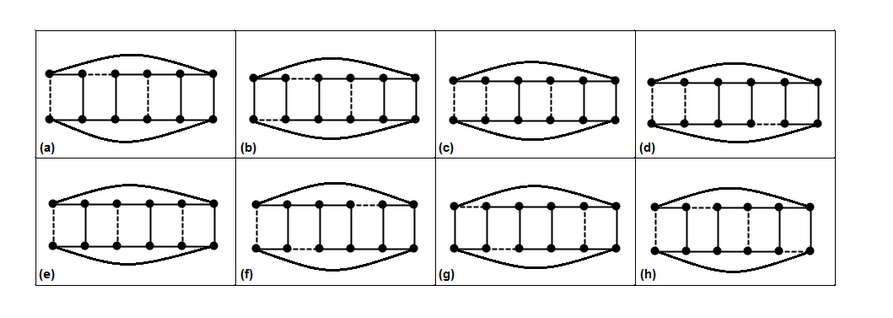}
\caption{$CL_{6}$ with three and four negative edges}\label{cl6}
\endminipage
\end{figure}
 
By Remark \ref{numbernegativec}, up to switching equivalence there is no 
$CL_{6}$ with more than four negative edges.
So, we conclude that the signed graph $CL_{6}$ with two or three negative edges 
has nowhere-zero $5$-flow.
\end{proof}

\begin{figure}[!htb]
\minipage{0.45\textwidth}
\includegraphics[width=\linewidth]{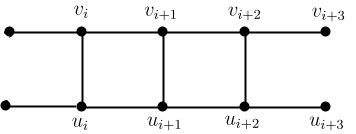}
\caption{A subladder $H$}\label{positiverangs}
\endminipage
\end{figure}

\begin{lem}\label{deleterangs}
Let $(CL_{n}, \sigma)$ be a flow admissible signed circular ladder with 
$n\geqslant 7$ having a positive square $S$ (in Fig.~\ref{positiverangs}, 
$S=v_{i+1}v_{i+2}u_{i+2}u_{i+1}v_{i+1}$.) 
If $((CL_{n}\setminus S)\cup\{v_{i}v_{i+3}, u_{i}u_{i+3}\}, \sigma)$ with 
$$\sigma(v_{i}v_{i+3})=\sigma( v_{i}v_{i+1})\sigma(v_{i+2}v_{i+3}) 
\quad \text{and}\quad 
\sigma(u_{i}u_{i+3})=\sigma(u_{i}u_{i+1})\sigma(u_{i+2}u_{i+3})$$ 
has a nowhere-zero $5$-flow, then $(CL_{n}, \sigma)$ has also a nowhere-zero 
$5$-flow.
\end{lem}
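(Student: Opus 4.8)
The plan is to build a nowhere-zero $5$-flow on $(CL_n,\sigma)$ directly from the given one on the reduced ladder. First I would switch so that the positive square $S$ is all-positive; since nowhere-zero $k$-flows are invariant under switching and a balanced $4$-cycle can always be switched to the all-positive one, this costs nothing, and it makes the sign rule for the two new edges transparent: with $\sigma(v_{i+1}v_{i+2})=\sigma(u_{i+1}u_{i+2})=+$, the product of the two retained boundary edges is exactly the product along the contracted path, so $\sigma(v_iv_{i+3})$ and $\sigma(u_iu_{i+3})$ are precisely what one obtains by contracting a positive $4$-cycle. Let $\phi'$ be the assumed nowhere-zero $5$-flow on $((CL_n\setminus S)\cup\{v_iv_{i+3},u_iu_{i+3}\},\sigma)$. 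I keep $\phi=\phi'$ on every edge common to both graphs, so I only have to define $\phi$ on the four edges of $S$ together with the restored boundary edges $v_iv_{i+1},v_{i+2}v_{i+3}$ on top and $u_iu_{i+1},u_{i+2}u_{i+3}$ on the bottom. Writing $a=\phi'(v_iv_{i+3})$ and $b=\phi'(u_iu_{i+3})$, Kirchhoff's law at $v_i,v_{i+3},u_i,u_{i+3}$ (whose external incidences are untouched) forces both top edges to carry $a$ and both bottom edges to carry $b$.

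It then remains to choose the flow on the four edges of $S$ so that Kirchhoff holds at the reinserted vertices $v_{i+1},v_{i+2},u_{i+1},u_{i+2}$ and every value is nonzero of absolute value at most $4$. Because $S$ is balanced, this is an ordinary conservation problem on a $4$-cycle, so the solution set is one-dimensional: letting $y$ be the value on the left rung $v_{i+1}u_{i+1}$, the top edge $v_{i+1}v_{i+2}$ carries $a-y$, the right rung $-y$, and the bottom edge $u_{i+1}u_{i+2}$ carries $b+y$. A nowhere-zero $5$-flow on $S$ therefore exists precisely when some $y\in\{\pm1,\dots,\pm4\}$ makes all of $y$, $a-y$, $b+y$ nonzero and of absolute value at most $4$. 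A routine check over the finitely many pairs shows such a $y$ exists for every $(a,b)\in\{\pm1,\dots,\pm4\}^2$ with the single exception $a=b=\pm4$, where the square would be forced to carry a value $\pm5$. In every non-exceptional case I simply record the resulting $\phi$, which is then a nowhere-zero $5$-flow on $(CL_n,\sigma)$.

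The heart of the proof is the exceptional case $a=b=\pm4$. Here I would not insert the square against the given $\phi'$, but first adjust $\phi'$ by adding a circulation of value $\pm1$ around a \emph{balanced} cycle of the reduced ladder that passes through exactly one of the two contracted edges, say through $v_iv_{i+3}$ but not through $u_iu_{i+3}$. Such a circulation lowers $|a|$ from $4$ to $3$ while leaving $b$ fixed, turning $(a,b)$ into the absorbable pair $(\pm3,\pm4)$, after which the one-parameter argument above applies. The delicate point, and the step I expect to be the main obstacle, is to produce such a balanced cycle through the prescribed edge along which the value can be decreased \emph{without creating a new zero}: the short candidate, the reduced square $v_iv_{i+3}u_{i+3}u_i$, traverses the two contracted edges in opposite senses and hence only shifts $(a,b)$ to $(a+c,b-c)$, which cannot repair $a=b=\pm4$; so one is forced to route around the ladder, where both the balance of the long cycle and the nonvanishing of the adjusted values must be controlled. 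This is exactly where the hypotheses $n\geqslant7$ and flow-admissibility enter: the extra rungs provide enough room to thread the correcting circulation through a positive square and to dodge the finitely many values that would drop to $0$. Once the values are brought into range, reinserting $S$ as above yields the desired nowhere-zero $5$-flow on $(CL_n,\sigma)$.
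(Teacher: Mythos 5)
Your main construction is the same as the paper's: keep the given flow on the reduced ladder, let the two reinstated boundary paths carry the values $a,b$ of the contracted edges, and observe that the flow on the reinserted square is a one\hyphen parameter family $y,\,a-y,\,b+y$ in the rung value $y$. The paper performs exactly this computation (its $x$, $x+c$, $x+c'$ in Fig.~\ref{NZF}), and your enumeration of when a valid $y$ exists, with the single bad pair $a=b=\pm4$, is correct. (One small caution about your opening move: switching the vertices of $S$ to make the square all\hyphen positive also flips some of the boundary edges $v_iv_{i+1},v_{i+2}v_{i+3},\dots$, and since the lemma's sign rule for $v_iv_{i+3}$ omits the factor $\sigma(v_{i+1}v_{i+2})$, the reduced signed graph you then work with need not be the one the hypothesis hands you a $5$-flow for; this is harmless in the paper's four normalized cases, where the top and bottom edges of $S$ are already positive, but it should be said.)

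The genuine gap is the exceptional case $a=b=\pm4$, which you identify correctly but do not resolve. Your proposed repair --- add a $\pm1$ circulation along a balanced cycle of the reduced ladder through $v_iv_{i+3}$ but not $u_iu_{i+3}$ --- rests on two unproved claims: that such a balanced cycle exists for an arbitrary signature (balance of a cycle through a prescribed edge is a property of $\sigma$, and neither $n\geqslant 7$ nor flow\hyphen admissibility obviously supplies one), and that the circulation can be threaded without sending some edge of that cycle to $0$ or to $\pm5$ (any edge of the cycle carrying $\mp1$ or $\pm4$ in the wrong orientation is an obstruction, and once the cycle and the sign of the correction are fixed there is no freedom left to ``dodge'' it). You flag this yourself as the main obstacle, and as written the lemma is only established outside the case $a=b=\pm4$. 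For comparison, the paper does not repair the flow at all in this situation: it argues that the configuration $c=c'=4$ is incompatible with the flow\hyphen admissibility of $(CL_n,\sigma)$, so the bad pair simply does not occur. Whatever one thinks of that argument, it is a different mechanism from yours, and to complete your proof you would need either to import it or to actually produce the balanced cycle and verify the no\hyphen new\hyphen zero and range conditions.
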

\begin{proof}
Up to switching equivalence there are four cases for the edges of $S$ and  the 
edge set $\{v_{i}v_{i+1}, v_{i+2}v_{i+3}, u_{i}u_{i+1}, u_{i+2}u_{i+3}\}$:

\begin{itemize}

\item[(i)]

If all edges of $S\cup \{v_{i}v_{i+1}, v_{i+2}v_{i+3}, u_{i}u_{i+1}, 
u_{i+2}u_{i+3}\}$ are positive.

\item[(ii)]

If $v_{i+1}u_{i+1}$ and $v_{i+2}u_{i+2}$ are negative.

\item[(iii)]

If $u_{i}u_{i+1}$ and $v_{i+2}v_{i+3}$ are negative.

\item[(iv)]

If just an edge $u_{i}u_{i+1}$ is negative.

\end{itemize}
 
Consider a nowhere-zero $5$-flow on $(CL_{n}\setminus S)\cup\{v_{i}v_{i+3}, 
u_{i}u_{i+3}\}$ with $\sigma(v_{i}v_{i+3})=\sigma( 
v_{i}v_{i+1})\sigma(v_{i+2}v_{i+3})$ and 
$\sigma(u_{i}u_{i+3})=\sigma(u_{i}u_{i+1})\sigma(u_{i+2}u_{i+3})$. We show that 
$(CL_{n}, \sigma)$ in each of the above cases has a nowhere-zero $5$-flow. It is 
sufficient to prove the assertion for one of the cases, the rest cases are 
proved similarly.
Let all edges of $S\cup \{v_{i}v_{i+1}, v_{i+2}v_{i+3}, u_{i}u_{i+1}, 
u_{i+2}u_{i+3}\}$ are positive. Without loss of generality we can assume that 
$v_{i}u_{i}$ is positive. Consider the left signed graph in Fig. \ref{NZF}, 
which $a, b, b^{\prime}, c, c^{\prime}\in \{\pm 1, \pm 2, \pm 3, \pm 4\}$. It is 
not hard to check that one can find a value $x\in \{\pm 1, \pm 2, \pm 3, \pm 
4\}$ such that $x+c, x+c^{\prime}\in \{\pm 1, \pm 2, \pm 3, \pm 4\}$, see the 
right signed graph in Fig. \ref{NZF}.

\begin{figure}[!htb]
\minipage{\textwidth}
\includegraphics[width=\linewidth]{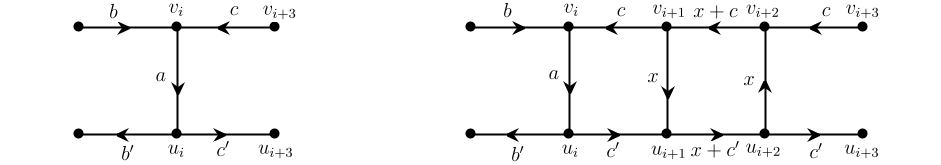}
\caption{Constructing a nowhere-zero $5$-flow on $CL_{n}$}\label{NZF}
\endminipage
\end{figure}

A note about other three cases, for example assume that $u_{i}u_{i+1}$ and 
$v_{i+2}v_{i+3}$ are negative. Delete $S$ and consider a nowhere-zero $5$-flow 
on the obtained signed circular ladder $CL_{n-2}$, see Fig. \ref{case2}. Similar 
to the mentioned case, and using this note that if $c=c^{\prime}=4$, then we 
achieve a contradiction because $(CL_{n}, \sigma)$ for $n\geqslant 7$, is flow 
admissible. (Since we assume that $(CL_{n}, \sigma)$ for $n\geqslant 7$, is flow 
admissible, there is a positive integer $\ell$ such that $(CL_{n}, \sigma)$ 
admits a nowhere-zero $\ell$-flow. Considering the right subladder in Fig. 
\ref{case2}, if $c=c^{\prime}=\ell -1$, we achieve a contradiction with flow 
admissibility of $(CL_{n}, \sigma)$. Now, we claim that $(CL_{n}, \sigma)$ has a 
nowhere-zero $5$-flow, so we can ignore this equality $c=c^{\prime}=4$).

\begin{figure}[!htb]
\minipage{\textwidth}
\includegraphics[width=\linewidth]{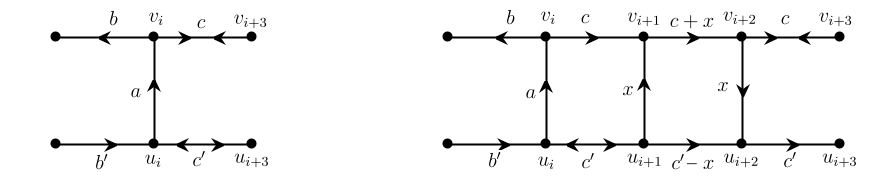}
\caption{Constructing a nowhere-zero $5$-flow on $CL_{n}$}\label{case2}
\endminipage
\end{figure}

\end{proof}

In the following theorem, we prove that the signed graph $CL_{n}$ for 
$n\geqslant 5$, with any signature admits a nowhere-zero $5$-flow. 

\begin{thm}
Let $n\geqslant 5$ be a positive integer. If a signed circular ladder $CL_{n}$ 
is flow admissible, then it has a nowhere-zero $5$-flow.
\end{thm}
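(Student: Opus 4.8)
The plan is to argue by induction on $n$, with $CL_5$ and $CL_6$ as base cases, where Lemma~\ref{cl5,6} already supplies a nowhere-zero $5$-flow for \emph{every} signature. For the inductive step I fix a flow-admissible $CL_n$ with $n\geq 7$ and try to contract it down to $CL_{n-2}$. Concretely, if $CL_n$ possesses a balanced (``positive'') square $S$, then up to switching its neighbourhood falls into one of the four configurations (i)--(iv) of Lemma~\ref{deleterangs}, and contracting $S$ as in that lemma produces a signed circular ladder $CL_{n-2}$ with the prescribed sign-products on the new rails. Since $5\leq n-2$, the induction hypothesis yields a nowhere-zero $5$-flow on $CL_{n-2}$, which Lemma~\ref{deleterangs} lifts back to a nowhere-zero $5$-flow on $CL_n$. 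To invoke the induction hypothesis I must know the contracted ladder is again flow-admissible; I would verify this by noting that $CL_{n-2}$ is still a bridgeless prism and that the rule $\sigma(v_iv_{i+3})=\sigma(v_iv_{i+1})\sigma(v_{i+2}v_{i+3})$ (and likewise on the bottom) preserves the balance of every surviving circuit, so no nowhere-zero-flow obstruction is created. The finitely many degenerate reduced ladders (a single negative edge, or balanced) are either excluded by the flow-admissibility of $CL_n$ or covered directly once $n-2\in\{5,6\}$.

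Everything therefore hinges on producing a balanced square. The crucial observation is that \emph{which} squares are balanced is a switching invariant: switching preserves the parity of negative edges around every circuit, hence the balance of each of the $n$ ladder squares $S_1,\dots,S_n$. So either some $S_j$ is balanced, and the reduction above applies, or \emph{every} square is unbalanced. I expect the second alternative to be the main obstacle, because it genuinely occurs within the bound of Remark~\ref{numbernegativec}: for even $n$ the ladder with all rails positive and rungs alternating in sign has exactly $n/2$ negative edges, yet not a single balanced square, so Lemma~\ref{deleterangs} cannot be applied to it at all.

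To dispose of the all-squares-unbalanced case I would first normalise the signature. Switching at the bottom vertices makes every rung positive, after which the relation $\beta_j=a_j+b_j=1$ shows each square has exactly one negative rail; switching whole columns (at $v_j$ and $u_j$ together) then rearranges these negative rails while keeping the rungs positive. Since the switching class of $CL_n$ is pinned down by the balances of the squares together with the balance of the top rail-cycle, this normalisation leaves at most two canonical signatures for each parity of $n$ -- for even $n$ the alternating-rung ladder, and for odd $n$ (for instance) the ladder with all rungs positive and all bottom rails negative. For these I would argue directly. In the even case a period-$4$ circulation (value $\pm1$ on the rungs, and values in $\{2,3,4\}$ ramping along the two rails) gives an explicit nowhere-zero $5$-flow, and moreover a tailored two-column contraction keeps the alternating-rung signature intact, descending to $CL_6$; the odd canonical ladder is handled the same way, a single such contraction creating a balanced square and thereby returning control to the main reduction.

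Finally, flow-admissibility enters in exactly two places: it is the hypothesis that licenses Lemma~\ref{deleterangs} in the inductive step (the choice of $x$ with $x+c,\,x+c'\in\{\pm1,\dots,\pm4\}$, using $c=c'=4$ to force a contradiction), and it is what excludes the degenerate reduced ladders that would otherwise break the induction. The restriction $n\geq5$ is essential, since the exceptional $CL_4$ of M\'{a}\v{c}ajov\'{a} admits no nowhere-zero $5$-flow; I would remark that this exception is itself an all-squares-unbalanced ladder too small to be absorbed by the base cases, which is consistent with the all-unbalanced family being the delicate part of the whole argument.
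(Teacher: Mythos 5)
Your proposal follows essentially the same route as the paper: base cases $CL_5$, $CL_6$ via Lemma~\ref{cl5,6}, an inductive reduction by two columns through a positive square using Lemma~\ref{deleterangs}, and a separate direct construction of explicit $5$-flows for the finitely many switching classes in which every square is unbalanced (the alternating-rung ladder for even $n$ and its odd/unbalanced-rail-cycle analogues), which are exactly the three exceptional cases the paper dispatches by the patterns in Fig.~\ref{le}. Your explicit attention to the flow-admissibility and possible degeneracy of the reduced ladder $CL_{n-2}$ is a point the paper passes over silently, but it does not change the substance of the argument.
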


\begin{proof}
If $n=5$ or $6$, then by Lemma \ref{cl5,6}, signed circular ladders $CL_{5}$ and 
$CL_{6}$ have nowhere-zero $5$-flow.
Assume that $k\geqslant 3$ is a positive integer which shows the number of 
negative edges in $CL_n$ for $n\geqslant 7$.
It is not hard to check that if signed graph $CL_{2k}$ has $k$ negative edges 
(with any signature), it has at least a positive square except in one case which 
all negative edges occur on rungs alternately. This exceptional case admits a 
nowhere-zero $4$-flow if $k$ is even, see Theorem \ref{2-factors}, and for odd 
$k$, one can find a pattern using Graphs (a) and (b) in Fig. \ref{le}, which 
shows that it has a nowhere-zero $5$-flow. Also, if $CL_{2k}$ has $k+1$ negative 
edges, there is just one signature with all negative squares; it is given in 
Fig. \ref{except2k}, three points among the rungs means there is a positive rung 
and then a negative rung, alternately. It has a nowhere-zero $5$-flow, see 
Graphs (c) and (d) in Fig. \ref{le}. Note that if $k$ is odd, then we can 
conclude that the exceptional case of $CL_{2k}$ with $k+1$ negative edges, given 
in Fig. \ref{except2k}, has nowhere-zero $4$-flow because it has two balanced 
$2$-factors, see Theorem \ref{2-factors}.

\begin{figure}[!htb]
\minipage{0.87\textwidth}
\includegraphics[width=\linewidth]{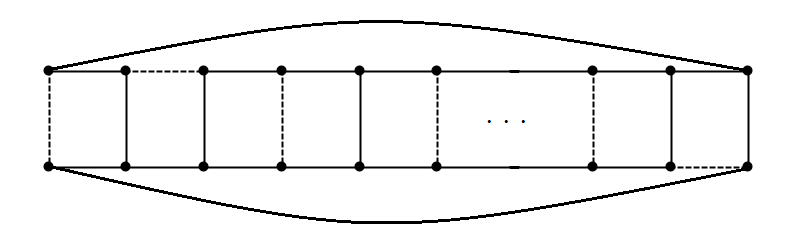}
\caption{The exceptional case of $CL_{2k}$ with $k+1$ negative 
edges}\label{except2k}
\endminipage
\end{figure}

Similarly, signed graph $CL_{2k+1}$ with $k$ or $k+1$ negative edges, with any 
signature, has at least a positive square except in one case with $k+1$ negative 
edges given in Fig. \ref{except2k+1}. One can find a certain pattern to exist a 
nowhere-zero $5$-flow on the exceptional case of $CL_{2k+1}$ with $k+1$ negative 
edges, see Graphs (e), (f), (g), and (h) in Fig. \ref{le}. 
Now, ignore the three exceptional cases of signed graphs $CL_{2k}$ and 
$CL_{2k+1}$ with $k$ and $k+1$ negative edges. 

\begin{figure}[!htb]
\minipage{0.90\textwidth}
\includegraphics[width=\linewidth]{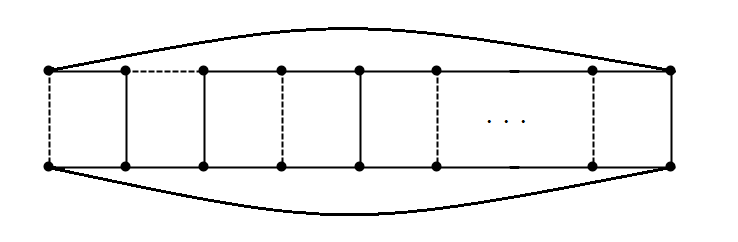}
\caption{The exceptional case of $CL_{2k+1}$ with $k+1$ negative 
edges}\label{except2k+1}
\endminipage
\end{figure}

We claim that the signed graphs $CL_{2k}$ and $CL_{2k+1}$ with $k$ and $k+1$ 
negative edges have nowhere-zero $5$-flow. Note that $k+1$ is the maximum number 
of the negative edges can occur in $CL_{2k}$ and $CL_{2k+1}$.
We prove the claim by induction on $k\geqslant 3$. Let $k=3$. We know that 
$CL_{6}$ with three or four negative edges has nowhere-zero $5$-flow. Also, we 
know that signed graph $CL_{7}$ with three and four negative edges (except the 
exceptional case given in Fig. \ref{except2k+1}) has at least a positive square. 
So, by Lemma \ref{deleterangs} and the existence of a nowhere-zero $5$-flow on 
$CL_5$, we conclude that $CL_{7}$ with three and four negative edges has 
nowhere-zero $5$-flow.
Now, assume that for each $4\leqslant i \leqslant k-1$, $CL_{2i}$ and 
$CL_{2i+1}$ with $i$ and $i+1$ negative edges have nowhere-zero $5$-flow. This 
is the induction hypothesis.
Consider $i=k$. Since $CL_{2k}$ and $CL_{2k+1}$ with $k$ or $k+1$ negative edges 
have at least one positive square (without considering exceptional cases). 
Hence, by Lemma \ref{deleterangs} and using the induction hypothesis, we 
conclude that each signed graph $CL_{n}$ for $n\geqslant 5$, with at least two 
negative edges has a nowhere-zero $5$-flow.

\end{proof}

\begin{lem}
The signed graph $CL_{4}$ has a nowhere-zero $6$-flow.
\end{lem}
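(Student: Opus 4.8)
The plan is to exhibit a single explicit nowhere-zero $6$-flow on the exceptional signed cube $CL_4$ of Fig.~\ref{cl4}. By Remark~\ref{numbernegativec} this signature has, up to switching equivalence, exactly three negative edges (since $[\tfrac{4}{2}]+1=3$), so I would first fix a switching representative and label the eight vertices as the outer square $v_1v_2v_3v_4$, the inner square $u_1u_2u_3u_4$, and the four rungs $v_iu_i$, recording which three of the twelve edges carry the negative sign. Note that $CL_4$ is the $3$-cube, hence cubic and bipartite, which is what makes the decomposition arguments below available.

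Next I would assign to each of the twelve edges an orientation together with a value from $\{\pm1,\pm2,\pm3,\pm4,\pm5\}$, using the paper's convention that a positive edge sends its value out of one endpoint and into the other, whereas a negative edge has both half-edges co-oriented relative to their endpoints. With the orientation fixed, Kirchhoff's law at each vertex is one linear equation in the incident half-edge values, and on a connected graph only seven of the eight equations are independent. I would solve this finite system so that every edge receives a value in $\{\pm1,\dots,\pm5\}$ and none receives $0$, and then display the resulting assignment in an accompanying figure; because there is genuine slack (five admissible magnitudes per edge rather than four), such a solution exists.

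A more structural alternative would build the flow by superposition. The cube admits a $1$-factorisation $\{F_1,F_2,F_3\}$ by König's lemma, and its three $2$-factors $F_i\cup F_j$ are unions of circuits along which I can route constant values. In the spirit of Theorem~\ref{2-factors} I would route value $1$ around the circuits of one $2$-factor and value $2$ or $3$ around another, choosing the signs and the second value so that, after accounting for the negative edges, the superposed value on every edge lands in $\{\pm1,\dots,\pm5\}$ and is nonzero.

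The hard part will be the nowhere-zero condition precisely at the three negative edges. There the two half-edges are co-oriented, so a circuit passing through such an edge contributes its value with a forced sign, and when two routed circuits meet on a single edge their contributions can cancel. Since the signature of Fig.~\ref{cl4} is exactly the one that blocks a nowhere-zero $5$-flow, Theorem~\ref{2-factors} does not apply (no two $2$-factors are balanced) and these cancellations cannot all be avoided inside $\{\pm1,\dots,\pm4\}$. The crux is therefore to exploit the extra magnitude $5$—equivalently, to route the second $2$-factor with value $3$ rather than $2$—so that every superposed edge value is simultaneously nonzero and bounded in absolute value by $5$; verifying this for the specific three-negative-edge signature completes the proof.
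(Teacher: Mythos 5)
Your proposal addresses only the signature of Fig.~\ref{cl4}, whereas the lemma as the paper proves it covers every flow-admissible signature of $CL_4$: by Remark~\ref{numbernegativec} there are at most three negative edges up to switching, and the paper first disposes of the two-negative-edge signatures via Theorem~\ref{twoneg} (a nowhere-zero $4$-flow, since $CL_4$ is cubic and bipartite) before turning to the unique three-negative-edge signature. That omission is easy to repair; the substantive gap is that you never actually produce the $6$-flow on the exceptional signature, which is the entire content of the lemma. Your first strategy ends with ``because there is genuine slack \dots\ such a solution exists,'' and this is not an argument: the same slack heuristic would predict a nowhere-zero $5$-flow for this very graph (four admissible magnitudes per edge), which is exactly what fails here, and it would likewise predict a $5$-flow for Bouchet's signed Petersen graph. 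Existence of a nowhere-zero $k$-flow never follows from counting admissible values per edge; for this lemma it has to be certified by an explicit assignment or an equivalent finite verification, which is what the paper does.

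Two further steps in your outline would fail as written. First, for a connected \emph{unbalanced} signed graph the Kirchhoff equations are not dependent in the way they are for ordinary graphs: a negative edge contributes with the same sign to both endpoint equations, so the eight vertex equations for the signature of Fig.~\ref{cl4} have rank $8$, not $7$, and the integer flow lattice has rank $12-8=4$ rather than $5$. Second, your superposition alternative routes a constant value around the circuits of a $2$-factor, but a constant circulation around an \emph{unbalanced} circuit is not a flow at all --- Kirchhoff's law around such a circuit forces $x=-x$, hence $x=0$. Since you yourself observe that no two of the three $2$-factors of this signature are balanced, there is no pair of $2$-factors along which constants can be routed, and replacing the value $2$ by $3$ does not address this; the construction of Theorem~\ref{2-factors} is simply unavailable for this graph, which is consistent with its having no nowhere-zero $5$-flow. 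The paper sidesteps all of this by exhibiting the flow directly on the one exceptional signature.
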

\begin{proof}
It is sufficient to check the signed circular ladders $CL_{4}$ with two and 
three negative edges.
If $CL_{4}$ has two negative edges by Theorem \ref{twoneg}, it has a 
nowhere-zero $4$-flow.
Also, there is just one signed circular ladder $CL_{4}$ with three negative 
edges (up to switching equivalence). It is given in Fig. \ref{cl4}, and it has a 
nowhere-zero $6$-flow. Note that it does not have a nowhere-zero $k$-flow for 
some positive integer $k<6$.
\end{proof}
\begin{figure}[!htb]
\minipage{0.60\textwidth}
\includegraphics[width=\linewidth]{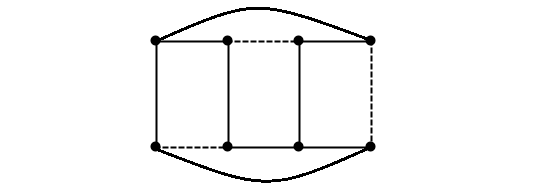}
\caption{}\label{cl4}
\endminipage
\end{figure}

\section{Nowhere-zero flow on signed M\"obius ladder $ML_{n}$}

In this section, we are going to show that signed M\"obius ladders $ML_{n}$ have a nowhere-zero $5$-flow.

\begin{rem}\label{maxnegm}
Note that in a signed M\"obius ladder $ML_{n}$, up to switching equivalence 
there exist at most $[\frac{n+1}{2}]$ negative edges. Moreover, M\"obius ladders 
$ML_{n}$ for odd $n$, are bipartite.
\end{rem}

So by Remark \ref{maxnegm}, $ML_{4}$ and $ML_{5}$ have at most $2$ and $3$ 
negative edges, respectively. 

\begin{lem}\label{ml4,5}
The signed M\"obius ladders $ML_{4}$ and $ML_{5}$ admit a nowhere-zero $5$-flow.
\end{lem}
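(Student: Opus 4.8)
The plan is to imitate the proof of Lemma~\ref{cl5,6}: bound the number of negative edges by Remark~\ref{maxnegm}, dispose of the cases with few negative edges using the general theorems already in hand, and treat the extremal cases by a short enumeration up to switching equivalence together with explicit flow assignments. By Remark~\ref{maxnegm}, up to switching equivalence $ML_{4}$ has at most $[\frac{5}{2}]=2$ negative edges and $ML_{5}$ has at most $[\frac{6}{2}]=3$ negative edges, so only finitely many signature classes have to be inspected in each case.

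First I would clear away the sub-extremal signatures. A signature with no negative edge is balanced, so the problem reduces to the all-positive underlying graph; both $ML_{4}$ and $ML_{5}$ are readily checked to be $3$-edge-colorable (color the even rim cycle alternately with two colors and give the third color to the perfect matching formed by the rungs), hence each carries a nowhere-zero $4$-flow. A signature switching-equivalent to a single negative edge is not flow-admissible, as a short Kirchhoff computation shows, so it need not be considered. Thus the substantive cases are exactly two negative edges for $ML_{4}$, and two or three negative edges for $ML_{5}$.

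For $ML_{5}$ I would exploit that it is bipartite (odd index) by Remark~\ref{maxnegm}. Being a bipartite cubic graph, any flow-admissible signature of $ML_{5}$ with exactly two negative edges admits a nowhere-zero $4$-flow immediately by Theorem~\ref{twoneg}. This leaves $ML_{5}$ with three negative edges, which I would handle by listing the representatives up to switching equivalence and exhibiting a nowhere-zero $5$-flow on each, in the style of the figures accompanying Lemma~\ref{cl5,6}. Whenever two of the three $2$-factors arising from a $1$-factorisation (which exists by König's lemma) are balanced, Theorem~\ref{2-factors} even produces a nowhere-zero $4$-flow and shortens the verification.

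The case $ML_{4}$ requires slightly more care, since $ML_{4}$ has even index and is therefore \emph{not} bipartite, so Theorem~\ref{twoneg} is unavailable. Here I would enumerate the two-negative-edge signatures up to switching equivalence---there are only a few, distinguished by whether the two negative edges lie on rungs, on the rim, or in a mixed position, and by their relative placement---and for each representative construct an explicit nowhere-zero flow valued in $\{\pm 1,\pm 2,\pm 3,\pm 4\}$, again preferring the Theorem~\ref{2-factors} shortcut when two of the three $2$-factors are balanced. The main obstacle, as throughout this part of the paper, is not depth but completeness of the case analysis: one must be certain that the switching reduction genuinely collapses all two-negative-edge signatures of $ML_{4}$ (respectively all three-negative-edge signatures of $ML_{5}$) onto the short list treated, and then check Kirchhoff's law at every vertex for the flows exhibited. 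No single verification is hard, but an oversight would most plausibly hide in the claim that the enumeration is exhaustive.
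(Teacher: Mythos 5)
Your proposal matches the paper's proof in all essentials: the paper likewise enumerates the two-negative-edge signatures of $ML_{4}$ up to switching (five representatives, four with a nowhere-zero $4$-flow and one needing a $5$-flow), invokes Theorem~\ref{twoneg} for $ML_{5}$ with two negative edges via its bipartiteness, and finishes $ML_{5}$ with three negative edges by listing the two switching classes and exhibiting explicit flows. The only difference is cosmetic: you additionally dispose of the balanced and single-negative-edge cases, which the paper leaves implicit.
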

\begin{proof}
All types of signed M\"obius ladder $ML_{4}$ with two negative edges (up to 
switching isomorphic) are listed in Fig. \ref{ml4}. It is not hard to find a 
nowhere-zero $4$-flow on Graphs (a), (b), (c), and (e), and Graph (d) has a 
nowhere-zero $5$-flow.
\begin{figure}[!htb]
\minipage{1\textwidth}
\includegraphics[width=\linewidth]{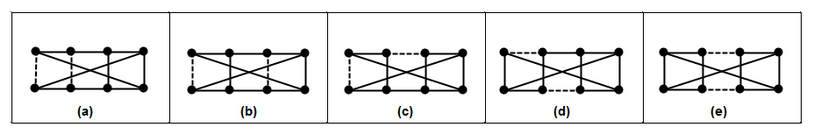}
\caption{All types of $ML_{4}$ with two negative edges}\label{ml4}
\endminipage
\end{figure}

All types of signed M\"obius ladder $ML_{5}$ with two negative edges have a 
nowhere-zero $4$-flow, see Theorem \ref{twoneg}. 
Now, if signed M\"obius ladder $ML_{5}$ has three negative edges, up to 
switching isomorphic there are just two types, see Fig. \ref{ml5}. Graph (a) and 
Graph (b) given in Fig. \ref{ml5}, have nowhere-zero $5$-flow and nowhere-zero 
$4$-flow, respectively.
\end{proof}
\vspace{-0.2 cm}
\begin{figure}[!htb]
\minipage{0.70\textwidth}
\includegraphics[width=\linewidth]{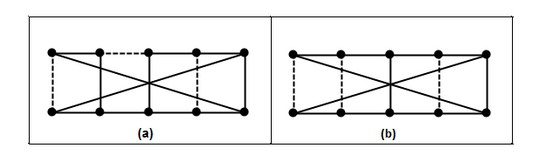}
\caption{$ML_{5}$ with three negative edges}\label{ml5}
\endminipage
\end{figure}

We can state a lemma similar to Lemma \ref{deleterangs} for signed M\"obius 
ladders.

\begin{lem}\label{deleterangsm}
Let the signed M\"obius ladder $(ML_{n}, \sigma)$ for $n\geqslant 6$, be flow 
admissible and have a positive square, $S$. In Fig. \ref{positiverangs}, let 
$S=v_{i+1}v_{i+2}u_{i+2}u_{i+1}v_{i+1}$. 
If $((ML_{n}\setminus S)\cup\{v_{i}v_{i+3}, u_{i}u_{i+3}\}, \sigma)$ with 
$$\sigma(v_{i}v_{i+3})=\sigma( v_{i}v_{i+1})\sigma(v_{i+2}v_{i+3})\quad 
\text{and}\quad\sigma(u_{i}u_{i+3})=\sigma(u_{i}u_{i+1})\sigma(u_{i+2}u_{i+3}
)$$ has a nowhere-zero $5$-flow, then $(ML_{n}, \sigma)$ has also a 
nowhere-zero $5$-flow.
\end{lem}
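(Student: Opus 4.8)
The plan is to mirror the proof of Lemma \ref{deleterangs} almost verbatim, since the operation being performed is entirely local to the positive square $S$ and its two flanking rungs, and the Möbius twist lives elsewhere in the ladder. Concretely, I would first observe that the edges involved in the contraction, namely $S \cup \{v_iv_{i+1}, v_{i+2}v_{i+3}, u_iu_{i+1}, u_{i+2}u_{i+3}\}$, form a subladder identical to the one in Fig.~\ref{positiverangs}, and that the Möbius crossing of $ML_n$ connects $v_n$ to $u_1$ and $u_n$ to $v_1$ at the \emph{ends} of the ladder, well away from the index range $i, \dots, i+3$ (here one uses $n \geqslant 6$ to guarantee enough room that $S$ can be chosen disjoint from the twisted rungs; if necessary one relabels so that the positive square avoids the crossing). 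Hence, from the point of view of the local gadget, the two cases $CL_n$ and $ML_n$ are indistinguishable.

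The main argument then proceeds exactly as before. Up to switching equivalence there are the same four cases (i)--(iv) for the signs of the edges in $S$ together with $\{v_iv_{i+1}, v_{i+2}v_{i+3}, u_iu_{i+1}, u_{i+2}u_{i+3}\}$, and it suffices to treat the all-positive case in detail, the remaining three being handled identically. Starting from a nowhere-zero $5$-flow on $(ML_n \setminus S) \cup \{v_iv_{i+3}, u_iu_{i+3}\}$ with the prescribed signs on the two new long edges, I would carry the flow values $a, b, b', c, c'$ across the deleted region as in Fig.~\ref{NZF}, and invoke the same elementary counting fact: there exists a value $x \in \{\pm 1, \pm 2, \pm 3, \pm 4\}$ with $x + c,\, x + c' \in \{\pm 1, \pm 2, \pm 3, \pm 4\}$, which reinstates $S$ and the four flanking rung/cycle edges with a valid nowhere-zero $5$-flow satisfying Kirchhoff's law at every reintroduced vertex. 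For the cases involving negative flanking edges one again uses flow-admissibility of $(ML_n, \sigma)$ for $n \geqslant 6$ to rule out the degenerate configuration $c = c' = 4$, exactly as in the remark at the end of Lemma \ref{deleterangs}'s proof.

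The only genuine point requiring care—and thus the step I expect to be the main obstacle—is verifying that the Möbius twist does not interact with the local surgery. Because the flow is defined globally and Kirchhoff's law must hold at the two crossing vertices as well, I would want to confirm that deleting $S$ and inserting the two long edges $v_iv_{i+3}$ and $u_iu_{i+3}$ produces a genuine smaller Möbius ladder $ML_{n-2}$ (rather than some other signed graph), so that the induction feeding this lemma in the subsequent theorem is legitimate. Once one checks that the twisted pair of rungs is untouched and the deletion/insertion acts purely on the planar portion, this is immediate, and the signs $\sigma(v_iv_{i+3}) = \sigma(v_iv_{i+1})\sigma(v_{i+2}v_{i+3})$ and $\sigma(u_iu_{i+3}) = \sigma(u_iu_{i+1})\sigma(u_{i+2}u_{i+3})$ guarantee that balance of every cycle passing through the region is preserved. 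With that verified, the entire proof of Lemma \ref{deleterangs} transfers without change, and I would simply write that the argument is identical to the circular case, pointing the reader to Figs.~\ref{NFT}, \ref{case2} for the explicit flow adjustments.
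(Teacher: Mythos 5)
Your proposal is correct and follows essentially the same route as the paper, whose entire proof of this lemma is the single line ``Proceed as in the proof of Lemma \ref{deleterangs}''; your additional check that the M\"obius crossing is disjoint from the local surgery (so the reduction genuinely yields $ML_{n-2}$) is exactly the implicit point the paper relies on. The only slip is a typographical one: the figure reference should be to Fig.~\ref{NZF}, not a nonexistent label.
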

\begin{proof}
Proceed as in the proof of Lemma \ref{deleterangs}.
\end{proof}

\begin{thm}
Let $n\geqslant 4$ be a positive integer. If a signed M\"obius ladder $ML_{n}$ 
is flow admissible, then it has a nowhere-zero $5$-flow.
\end{thm}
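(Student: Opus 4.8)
The plan is to mirror the structure of the circular-ladder theorem, using Lemma~\ref{deleterangsm} as the inductive reduction step and the base cases from Lemma~\ref{ml4,5}. First I would invoke Remark~\ref{maxnegm} to bound the number of negative edges: up to switching equivalence, $ML_n$ has at most $\lfloor\frac{n+1}{2}\rfloor$ negative edges, so it suffices to treat signatures with between two and this maximum number of negative edges (the all-positive and single-negative cases being handled by balancedness or flow-admissibility considerations). The induction would be on $n$, with $ML_4$ and $ML_5$ already settled by Lemma~\ref{ml4,5}, and $ML_6$, $ML_7$ verified directly as the genuine base of the inductive machine, since Lemma~\ref{deleterangsm} only reduces $ML_n$ down to an object on $n-2$ rungs.

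Next I would establish the analogue of the "positive square exists" dichotomy used in the circular case: for $ML_n$ with the maximal or near-maximal number of negative edges, one checks that at least one positive square $S = v_{i+1}v_{i+2}u_{i+2}u_{i+1}$ is present except for a small list of exceptional signatures where the negative edges are distributed so densely (e.g.\ alternating on rungs) that no positive square survives. For each such exceptional configuration I would exhibit an explicit nowhere-zero $5$-flow (or $4$-flow where the two-balanced-$2$-factor criterion of Theorem~\ref{2-factors} applies), reusing the repeating flow patterns analogous to Fig.~\ref{le}. The Möbius twist changes how the two ends of the ladder close up, so the list of exceptional signatures and their flow patterns will differ slightly from the circular case, but the counting argument and the pattern-propagation idea carry over unchanged.

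With the exceptional cases disposed of, the main body of the argument is the induction: given flow-admissible $ML_n$ with $n\geqslant 8$ carrying a non-exceptional signature, I locate a positive square $S$, form the reduced signed Möbius ladder $(ML_n\setminus S)\cup\{v_iv_{i+3},u_iu_{i+3}\}$ on $n-2$ rungs with the signature prescribed by Lemma~\ref{deleterangsm}, and observe it has strictly fewer rungs while remaining flow-admissible with at most the maximal number of negative edges permitted by Remark~\ref{maxnegm}. By the induction hypothesis it carries a nowhere-zero $5$-flow, and Lemma~\ref{deleterangsm} lifts this flow back to $ML_n$.

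The main obstacle I expect is verifying that contracting a positive square genuinely returns a \emph{Möbius} ladder of the same type (rather than a circular ladder or a disconnected/multigraph object) and that the negative-edge count of the reduced graph still falls within the range covered by the induction hypothesis; the Möbius identification of the two ends means that a square chosen too close to the "twist" could interact with it, so I would argue that for $n$ large enough one can always select a positive square disjoint from the twisting rungs. A secondary but purely mechanical difficulty is the finite case-checking of the small exceptional signatures for $ML_6$ and $ML_7$, which I would handle by direct construction exactly as in Lemma~\ref{cl5,6}.
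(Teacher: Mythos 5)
Your proposal follows essentially the same route as the paper: base cases $ML_4$, $ML_5$ from Lemma~\ref{ml4,5}, a dichotomy between signatures containing a positive square and a short list of exceptional ``dense'' signatures handled by explicit repeating flow patterns (and Theorem~\ref{2-factors} where two balanced $2$-factors exist), and an induction that deletes a positive square via Lemma~\ref{deleterangsm} to reduce to a Möbius ladder on two fewer rungs. The only cosmetic difference is that you index the induction by $n$ while the paper indexes by the negative-edge count $k$ with $n\in\{2k,2k+1\}$; the substance is identical.
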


\begin{proof}
If $n=4$ or $5$, then by Lemma \ref{ml4,5}, signed M\"obius ladders $ML_{4}$ and 
$ML_{5}$ with any signature have nowhere-zero $5$-flow.
Let $k\geqslant 3$ is a positive integer which denotes the number of negative 
edges in $ML_{n}$ for $n\geqslant 6$.
One can check that if signed graph $ML_{2k}$ has $k-1$ or $k$ negative edges 
(with any signature), it has at least a positive square except in one case which 
all negative edges occur on rungs alternately. This exceptional case admits a 
nowhere-zero $4$-flow. Using nowhere-zero $4$-flow on Graphs (a), (b), (c), and 
(d) in Fig. \ref{el}, we can obtain a pattern for the existence a nowhere-zero 
$4$-flow on this exceptional case.
Moreover, signed M\"obius ladders $ML_{2k+1}$ with $k$ or $k+1$ negative edges, 
with any signature, have at least a positive square except in one case with 
$k+1$ negative edges given in Fig. \ref{exceptm2k+1}. This exceptional case has 
nowhere-zero $5$-flow, see Graphs (e), (f), (g), and (h) in Fig. \ref{el}. Now, 
ignore the three exceptional cases of signed graphs $ML_{2k}$ with $k-1$ and $k$ 
negative edges and $ML_{2k+1}$ with $k$ and $k+1$ negative edges. 

\begin{figure}[!htb]
\minipage{0.85\textwidth}
\includegraphics[width=\linewidth]{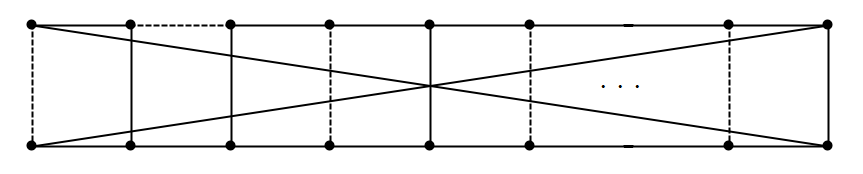}
\caption{The exceptional case of $ML_{2k+1}$ with $k+1$ negative edges}\label{exceptm2k+1}
\endminipage
\end{figure}

In order to prove the assertion, it is sufficient to show that signed graphs $ML_{2k}$ with $k$ negative edges and $ML_{2k+1}$ with $k+1$ negative edges have nowhere-zero $5$-flow. Note that $k$ and $k+1$ are the maximum number of the negative edges can occur in $ML_{2k}$ and $ML_{2k+1}$, respectively.
To prove the claim use induction on $k\geqslant 3$. Let $k=3$. One can check that $ML_{6}$ with three negative edges has at least a positive square, (as it mentioned just in one case there is not a positive square). Also, signed graph $ML_{7}$ with four negative edges (except the exceptional case given in Fig. \ref{exceptm2k+1}) has at least a positive square. So, by Lemma \ref{deleterangs} and the existence of a nowhere-zero $5$-flow on $ML_{4}$ and $ML_{5}$, we conclude that $ML_{6}$ and $ML_{7}$ with three and four negative edges, respectively have nowhere-zero $5$-flow.
Now, assume that for each $4\leqslant i \leqslant k-1$, $ML_{2i}$ and $ML_{2i+1}$ with $i$ and $i+1$ negative edges, respectively have nowhere-zero $5$-flow. This is the induction hypothesis.
Consider $i=k$. Since $ML_{2k}$ and $ML_{2k+1}$ with $k$ and $k+1$ negative edges, respectively have at least one positive square (without considering exceptional cases). Hence, by Lemma \ref{deleterangs} and using the induction hypothesis, we conclude that each signed graph $ML_{n}$ for $n\geqslant 4$, with at least two negative edges has a nowhere-zero $5$-flow.
\end{proof}

\begin{rem}
Note that the next natural class to look at are the generalized Petersen graphs, see \cite{GPG1}. The reason of considering this family of graphs is that the Petersen graph is exceptional among all generalized Petersen graphs by not admitting a nowhere-zero $4$-flow. Since the proof by Castagna and Prins in \cite{GPG2} that shows that all other generalized Petersen graphs admit a nowhere-zero $4$-flow (are
$3$-edge-colourable) is far from easy, and there is no other shorter proof, it is plausible, that establishing a nowhere-zero $5$-flow for signed generalized Petersen graphs is not going to be easy.
\end{rem}

\begin{figure}[!htb]
\minipage{0.90\textwidth}
\includegraphics[width=\linewidth]{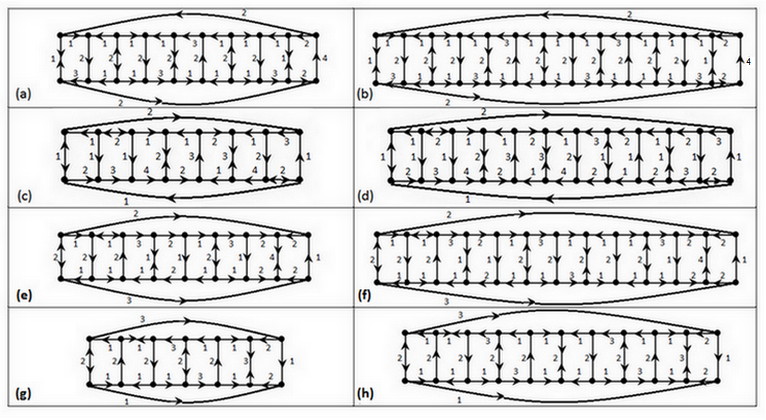}
\caption{Nowhere-zero $5$-flow on the exceptional cases of $CL_{n}$}\label{le}
\endminipage
\end{figure}

\begin{figure}[!htb]
\minipage{0.90\textwidth}
\includegraphics[width=\linewidth]{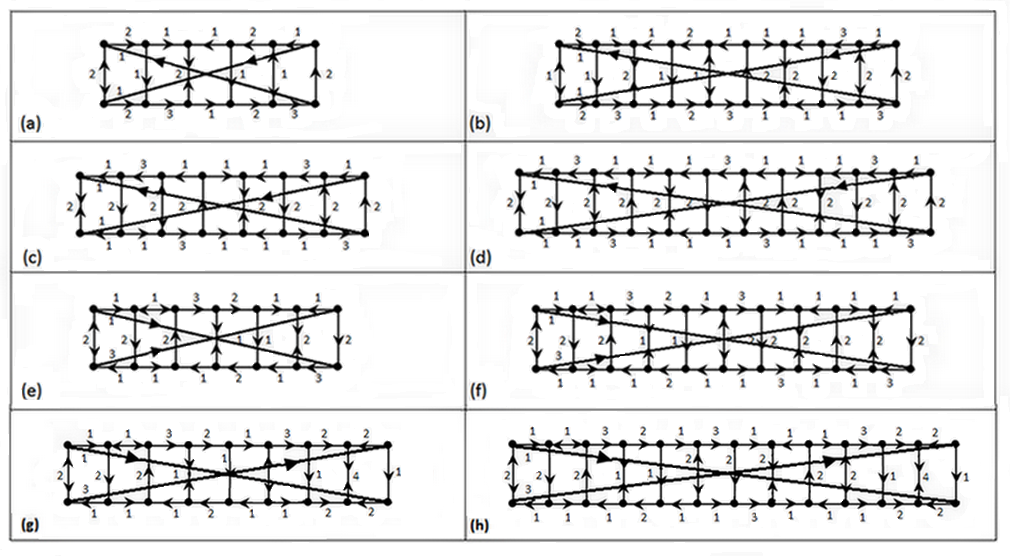}
\caption{Nowhere-zero $5$-flow on the exceptional cases of $ML_{n}$}\label{el}
\endminipage
\end{figure}

\end{document}